\documentclass[12pt,leqno,fleqn]{amsart}
\usepackage{amsmath,amstext,amsthm,amssymb,amsxtra}
\usepackage[top=1.5in, bottom=1.5in, left=1.25in, right=1.25in]	{geometry}
\usepackage[normalem]{ulem}
\usepackage{txfonts} 
\usepackage[T1]{fontenc}
\usepackage{lmodern}
\usepackage{tikz}\usetikzlibrary{arrows}

\usepackage{euler}   




\makeatletter
\DeclareRobustCommand\widecheck[1]{{\mathpalette\@widecheck{#1}}}
\def\@widecheck#1#2{%
    \setbox\z@\hbox{\m@th$#1#2$}%
    \setbox\tw@\hbox{\m@th$#1%
       \widehat{%
          \vrule\@width\z@\@height\ht\z@
          \vrule\@height\z@\@width\wd\z@}$}%
    \dp\tw@-\ht\z@
    \@tempdima\ht\z@ \advance\@tempdima2\ht\tw@ \divide\@tempdima\thr@@
    \setbox\tw@\hbox{%
       \raise\@tempdima\hbox{\scalebox{1}[-1]{\lower\@tempdima\box
\tw@}}}%
    {\ooalign{\box\tw@ \cr \box\z@}}}
\makeatother

\usepackage{mathtools}
\mathtoolsset{showonlyrefs,showmanualtags}

\usepackage{hyperref} 
\hypersetup{
    colorlinks=true,       
    linkcolor=blue,          
    citecolor=magenta,        
    filecolor=magenta,      
    urlcolor=cyan           
}

\usepackage[msc-links]{amsrefs}


\theoremstyle{plain} 
\newtheorem{lemma}[equation]{Lemma}

\newtheorem{theorem}[equation]{Theorem}

\theoremstyle{definition}
\newtheorem{definition}[equation]{Definition}

\theoremstyle{remark}
\newtheorem{remark}[equation]{Remark}

\numberwithin{equation}{section}

%

%

%
%
%

%
%
%
%
%
%

\title[Boundedness of Doob Maximal Operators]{A Note on the Boundedness of Doob Maximal Operators on a Filtered Measure Space}
 \subjclass[2000]{Primary: 60G46 Secondary: 60G42}
 \keywords{Filtered measure space, Doob maximal operator, Weighted inequality, Principal Sets}

\author[W. Chen]{Wei Chen}
\address{ School of Mathematical Sciences, Yangzhou University, Yangzhou 225002, China}
\email {weichen@yzu.edu.cn}

\author[J. Y. Cui]{Jingya Cui}
\address{ School of Mathematical Sciences, Yangzhou University, Yangzhou 225002, China}
\email {jycui$\_$yzu@163.com}

\thanks{The research of W. Chen is supported by the National Natural Science Foundation of China(11971419, 11771379) and the School Foundation of Yangzhou University (2019CXJ001).}



\begin{document}
\begin{abstract}
Let $M$ be the Doob maximal operator on a filtered measure space and let $v$ be an $A_p$ weight with $1<p<+\infty$. 
We try proving that \begin{equation}\lVert M  f\rVert _{L ^{p}(v) }\leq p^{\prime}[v]^{\frac{1}{p-1}}_{A_p}\lVert  f\rVert _{L ^{p} (v)},\end{equation}
where $1/p+1/p^{\prime}=1.$
Although we do not find an approach which gives the constant $p^{\prime},$  we obtain that
\begin{equation}\lVert M  f\rVert _{L ^{p}(v) }\leq p^{\frac{1}{p-1}}p^{\prime}[v]^{\frac{1}{p-1}}_{A_p}\lVert  f\rVert _{L ^{p} (v)},
\end{equation}
with
$\lim\limits_{p\rightarrow+\infty}p^{\frac{1}{p-1}}=1.$

\end{abstract}

	\maketitle
\tableofcontents

\section{Introduction}\label{Pre}

Let $M$ be the Doob maximal operator on a filtered measure space. For $1<p<+\infty,$ it is well known (see e.g. \cite{MR1224450}) that 
\begin{equation}\label{ineq-n}
\lVert M  f\rVert _{L ^{p} }\leq p’\lVert  f\rVert _{L ^{p} },
\end{equation} where $1/p+1/p’=1$ and $p’$ is the best constant.
Let $v$ be an $A_p$ weight with $1<p<+\infty.$  Tanaka and Terasawa \cite{MR3004953}  proved that
\begin{equation}\label{ineq-w}
\lVert M  f\rVert _{L ^{p}(v) }\leq C[v]^{\frac{1}{p-1}}_{A_p}\lVert  f\rVert _{L ^{p} (v)},
\end{equation}
where $C$ is independent of $v.$

For a Euclidean space with a dyadic filtration, the dyadic maximal operator is the above Doob maximal operator. For  the dyadic maximal operator, the constant $1/(p-1)$ is the optimal power on $[v]_{A_{p}}$(see e.g. \cite{MR2534183} or \cite{ MR2399047}) . It follows that the constant $1/(p-1)$ is also the optimal power on $[v]_{A_{p}}$ for the Doob maximal operator $M.$ 

In this note, we estimate the constant $C$ in \eqref{ineq-w}. Substituting $v=1$ into \eqref{ineq-w},  we  get \eqref{ineq-n}.  Thus, we conjecture that the constant $C$ equals $p’$ in \eqref{ineq-w}. 
But we do not find an approach which gives the constant $C=p’.$  Our results are as follows.

\begin{theorem}\label{thm_Ap} Let $v$ be a weight  and $1< p< \infty.$ 
We have the inequality
\begin{equation}\label{norm}
\lVert M  f\rVert _{L ^{p} (v)} \leq C \lVert f\rVert _{L ^{p} (v)}
\end{equation}
if and only if $v\in  A_p$.
Moreover, if we denote the smallest constant in \eqref{norm} by $\|M\|$, we have
\begin{equation}\label{Ap_con}
[v]_{A_p}\leq\|M\|^p
\end{equation}
and
\begin{equation}\label{M_con}
\|M\|\leq p^{\frac{1}{p-1}}p’[v]^{\frac{1}{p-1}}_{A_p}.
\end{equation}
\end{theorem}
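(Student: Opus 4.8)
The plan is to prove the two halves of Theorem~\ref{thm_Ap} separately, establishing first that the weighted bound forces $v \in A_p$ together with the lower estimate \eqref{Ap_con}, and then that membership in $A_p$ yields the upper estimate \eqref{M_con}. The lower bound is the easy direction: assuming \eqref{norm} holds with constant $\|M\|$, I would fix an element $B$ of the filtration and test the inequality on $f = \chi_B \sigma$, where $\sigma = v^{-1/(p-1)}$ is the dual weight. On $B$ the averaging in the definition of $M$ recovers the average of $\sigma$ over $B$, so that $Mf \geq \bigl(\tfrac{1}{|B|}\int_B \sigma\bigr)\chi_B$ pointwise on $B$. Plugging this into \eqref{norm}, computing both sides explicitly (the $L^p(v)$ norm of $f$ involves $\int_B \sigma^p v = \int_B \sigma$ since $\sigma^p v = \sigma$), and dividing through, the averages rearrange exactly into the $A_p$ characteristic $\bigl(\tfrac{1}{|B|}\int_B v\bigr)\bigl(\tfrac{1}{|B|}\int_B \sigma\bigr)^{p-1}$. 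Taking the supremum over $B$ gives $[v]_{A_p} \leq \|M\|^p$, which is \eqref{Ap_con}.

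For the upper bound \eqref{M_con}, which is the substantive direction, the strategy is the standard reduction to a linearized, \textbf{principal-set} (or sparse/Carleson) decomposition of the level sets of $Mf$, adapted to the filtered setting; this is precisely the machinery invoked by the keywords and presumably developed earlier in the paper. The first step is to replace $M$ by a discretized version controlled by averages over a collection of principal sets $\{E_k\}$ that are sparse in the sense of having pairwise-comparable measure decay, so that $\|Mf\|_{L^p(v)}^p$ is dominated by a sum over these sets of $\bigl(\langle |f|\rangle_{E_k}\bigr)^p v(E_k)$, where $\langle\,\cdot\,\rangle_{E_k}$ denotes the unweighted average. Next I would factor the average through the dual weight, writing $f = (f\sigma^{-1})\sigma$ and applying Hölder's inequality in the form $\langle |f|\rangle_{E_k} \leq \langle |f|^p \sigma^{1-p}\rangle^{1/p}_{\sigma,E_k}\,\langle \sigma\rangle_{E_k}^{1/p'}$ with respect to the $\sigma$-weighted average, so that the relevant quantity becomes a $\sigma$-average of $f^p v$ against a Carleson-type weight.

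The heart of the argument is then a \emph{dual maximal function} estimate: one bounds the resulting sum by testing against the maximal operator computed with respect to the measure $\sigma\,d\mu$, for which the $L^{p'}(\sigma)$-boundedness of $M$ with its own sharp constant $p$ (the conjugate-exponent analogue of \eqref{ineq-n}, now an $L^{p'}$ bound giving the factor $(p')' = p$) can be applied. The $A_p$ characteristic enters through the sparseness/Carleson packing condition: each set $E_k$ carries a factor $\langle v\rangle_{E_k}\langle\sigma\rangle_{E_k}^{p-1} \leq [v]_{A_p}$, contributing the power $[v]_{A_p}^{1/(p-1)}$ after the Hölder exponents are balanced. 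Combining the sharp unweighted constant for the dual maximal operator (the source of $p^{1/(p-1)}$) with the base $L^p$ constant $p'$ and the packing factor produces exactly $p^{1/(p-1)} p' [v]_{A_p}^{1/(p-1)}$.

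The main obstacle I anticipate is keeping the constants sharp through the Hölder and duality steps rather than merely proving boundedness with some unspecified $C$. In particular, the appearance of the \emph{extra} factor $p^{1/(p-1)}$ — rather than the conjectured clean $p'$ — strongly suggests that the bound on the dual maximal operator is applied at exponent $p'$ and its sharp constant $(p')' = p$ is raised to the power $1/(p-1)$ when transferred back through the duality; the difficulty is that this seems unavoidable with the principal-set approach, which is exactly why the authors frame \eqref{M_con} as falling short of the conjecture. I would therefore take care to track each exponent explicitly, verify that the Carleson packing constant is genuinely controlled by $[v]_{A_p}$ (not a larger quantity), and confirm that the limiting behaviour $p^{1/(p-1)} \to 1$ as $p \to \infty$ is consistent with recovering the conjectured constant asymptotically.
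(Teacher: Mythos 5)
Your first half is fine and matches the paper: testing \eqref{norm} on $f=\chi_B\sigma$ with $\sigma=v^{-1/(p-1)}$ for $B\in\mathcal F_i^0$, using $Mf\geq \mathbb E_i(\sigma)\chi_B$ and $\sigma^p v=\sigma$, and rearranging the resulting integrals into $\mathbb E_i(\sigma)^p\mathbb E_i(v)\leq \|M\|^p\,\mathbb E_i(\sigma)$ gives \eqref{Ap_con} exactly as in Section \ref{Proofthm3} (your sign convention for the dual weight is the one the paper actually uses in its computations).

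The upper bound, however, contains a genuine gap: you attribute the constant $p^{\frac{1}{p-1}}p'$ to the principal-set (sparse) decomposition, and that route provably cannot produce it. In the filtered setting the discretization parameter $a>1$ and the conditional-sparsity constant $\eta=a/(a-1)$ of the principal sets enter multiplicatively, and this argument (the paper's Approach \eqref{Chenpri}) yields $C=a^2\eta^{p'-1}p'$, with $M^\sigma$ applied once on $L^p(\sigma)$ with constant $p'$ --- there is no dual application of a maximal operator at exponent $p'$ in this route, so no factor $p^{1/(p-1)}$ ever appears. Optimizing over $a$ (Theorem \ref{mini}) gives at best $\phi(p)=\bigl(\tfrac{2p-1}{2p-2}\bigr)^2(2p-1)^{\frac{1}{p-1}}$ in place of $p^{\frac{1}{p-1}}$, and $\phi(p)>\psi(p)=p^{\frac{1}{p-1}}$ for every finite $p$ (Section \ref{Compar}); so your plan as written proves \eqref{norm} with some $C$, but not \eqref{M_con}. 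The factor $p^{\frac{1}{p-1}}$ comes from a different mechanism, involving no sparse decomposition at all: either the pointwise factorization
\begin{equation}
Mf\leq [v]_{A_p}^{\frac{1}{p-1}}\, M^v\bigl(v^{-1}M^\sigma(f\sigma^{-1})^{p-1}\bigr)^{\frac{1}{p-1}},
\end{equation}
followed by the unweighted Doob bound for $M^v$ on $L^{p'}(v)$ (constant $p$, surfacing as $p^{\frac{1}{p-1}}$ after the power $\frac{1}{p-1}$, via $\|v^{-1}M^\sigma(f\sigma^{-1})^{p-1}\|_{L^{p'}(v)}=\|M^\sigma(f\sigma^{-1})\|^{p-1}_{L^p(\sigma)}$) and then for $M^\sigma$ on $L^p(\sigma)$ (constant $p'$); or Long's stopping-time distributional argument, where the level discretization uses a free parameter $b>1$ which, unlike the sparse parameter $a$, enters only as $b^{2p}$ and can be sent to $1^+$ at the end, leaving $p^{p'/p}\,p'=p^{\frac{1}{p-1}}p'$. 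Your closing remark --- that the extra factor ``seems unavoidable with the principal-set approach'' --- is the right instinct but the wrong attribution: the principal-set approach gives something strictly worse than $p^{\frac{1}{p-1}}p'$, and to prove \eqref{M_con} as stated you need one of the two non-sparse arguments above.
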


\begin{remark}\label{rkapp}The content of Theorem \ref{thm_Ap}  is \eqref{M_con}.
In order to prove \eqref{M_con}, we use different approaches as follows:
\begin{enumerate}
  \item \label{Lernerpf} Motivated by the proof of \cite[Theorem B]{MR2399047}, we get $C=p^{\frac{1}{p-1}}p’.$ 
  \item \label{Chenpri} Using the construction of principal sets \cite{MR3004953} and the conditional sparsity \cite{MR4125846}, we have $C=a^2\eta^{(p’-1)}p’,$
where $a,~\eta$ are the constants in the construction of principal sets (Appendix \ref{Appe}).
 \item \label{Long} Long \cite[Theorem 6.6.3]{MR1224450} qualitatively evaluated $\|M\|$. Modifying Long’s proof, we have $C=p^{\frac{1}{p-1}}p’$ which is the same as \eqref{Lernerpf}. 
\end{enumerate}
Approaches \eqref{Lernerpf} and \eqref{Long} both use the boundedness of Doob maximal operator twice and give the same estimation $C=p^{\frac{1}{p-1}}p’.$  
Approach \eqref{Chenpri}  depends on the conditional sparsity and the boundedness of Doob maximal operator. Letting $\sigma=v^{\frac{1}{p-1}}$
and $f=h\sigma,$ we can rewrite \eqref{norm} as
\begin{equation}\label{norm2}
\lVert M  (h\sigma)\rVert _{L ^{p} (v)} \leq C \lVert h\sigma\rVert _{L ^{p} (\sigma)}.
\end{equation}
Cao and Xue \cite{MR3424618} (see also the references therein)  used the 
atomic decomposition to study weighted theory on the Euclidean space, but we do not know whether it is possible on the filtered measure space.
\end{remark}

This paper is organized as follows. Sect. \ref{pre1} consists of the preliminaries for this paper. In Sect. \ref{Proofthm3} we give 
the proof of Theorem \ref{thm_Ap} , and in Sect. \ref{Compar} we compare $p^{\frac{1}{p-1}}$ with $a^2\eta^{(p’-1)}$. 
In order to keep track the constants in our paper, we modify the construction of principal sets in Appendix \ref{Appe}.

\section{Preliminaries} \label{pre1}
The filtered measure space was discussed in \cite{MR3004953,MR3617205}, which is abstract and contains several kinds of spaces. For example,  a doubling metric space with systems of dyadic cubes was introduced in Hyt\"{o}nen and Kairema \cite{MR2901199}. 
In order to develop discrete martingale theory, a probability space endowed with a family of $\sigma$-algebra was considered in Long \cite{MR1224450}. In addition,  a Euclidean space with several adjacent systems of dyadic cubes
was mentioned in Hyt\"{o}nen \cite{MR2912709}. Because the filtered measure space  is abstract, it is possible to study these spaces together(\cite{MR2854692, MR3644418, MR1267569}).
As is well known, Lacey,
Petermichl and Reguera \cite{MR2657437}  studied the shift  operators, which is related to the martingale theory on a filtered measure space.
When Hyt\"{o}nen \cite{MR2912709} solved the conjecture of $A_2,$  those operators are very useful. 

\subsection{Filtered Measure Space} 
Let $(\Omega,\mathcal{F},\mu)$ be a measure space and  let  $\mathcal{F}^0=\bigcup\{E:E\in \mathcal{F},~\mu(E)<+\infty\}$. 
 As for $\sigma$-finite, we mean that $\Omega$ is a union of $(E_i)_{i\in \mathbb Z}\subset\mathcal{F}^0.$ We only consider $\sigma$-finite measure space $(\Omega,\mathcal{F},\mu)$ in this paper.
Let $\mathcal{B}$ be a sub-family of $\mathcal{F}^0$ and let $f:\Omega\rightarrow \mathbb R$ be measurable on $(\Omega,\mathcal{F},\mu)$. If for all $B\in \mathcal{B}$, we have 
$\int_B|f|d\mu<+\infty,$ then we say that $f$ is $\mathcal{B}$-integrable. The family of the above functions is denote by $L^1_{\mathcal{B}}(\mathcal{F},\mu).$

Let $\mathcal{B}\subset\mathcal{F}$ be a sub-$\sigma$-algebra and let $f\in L^1_{\mathcal{B}^0}(\mathcal{F},\mu)$. 
Because of $\sigma$-finiteness of $(\Omega,\mathcal{B},\mu)$ and Radon-Nikod\'ym’s theorem, there is a unique function denoted by
$\mathbb E(f|\mathcal{B})\in L^1_{\mathcal{B}^0}(\mathcal{B},\mu)$ or $\mathbb E_{\mathcal{B}}(f)\in L^1_{\mathcal{B}^0}(\mathcal{B},\mu)$ 
such that
\begin{equation*}
\int_Bfd\mu=\int_B \mathbb E_{\mathcal{B}}(f) d\mu,\quad \forall B\in\mathcal{B}^0.
\end{equation*}

Letting $(\Omega,\mathcal{F},\mu)$ with a family $(\mathcal{F}_i)_{i\in \mathbb Z}$ of sub-$\sigma$-algebras satisfying that $(\mathcal{F}_i)_{i\in \mathbb Z}$ is increasing, we say that 
$\mathcal{F}$  has a filtration $(\mathcal{F}_i)_{i\in \mathbb Z}$. Then, a quadruplet $(\Omega,\mathcal{F},\mu; (\mathcal{F}_i)_{i\in \mathbb Z})$
is said to be a filtered measure space.
It is clear that $L^1_{\mathcal{F}^0_i}(\mathcal{F},\mu)\supset L^1_{\mathcal{F}^0_j}(\mathcal{F},\mu)$ with $i < j.$ Let
$\mathcal{L}:=\bigcap\limits_{i\in \mathbb Z}L^1_{\mathcal{F}^0_i}(\mathcal{F},\mu)$ and $f\in\mathcal{L},$ then $(\mathbb E_i(f))_{i\in\mathbb Z}$ is a martingale, where 
$\mathbb E_i(f)$ means $\mathbb E(f|\mathcal{F}_i).$ The reason is that $\mathbb E_i(f)=\mathbb E_i(\mathbb E_{i+1}(f)),$ $i\in \mathbb Z.$

\subsection{Stopping Times}\label{stopping}

Let $(\Omega,\mathcal{F},\mu; (\mathcal{F}_i)_{i\in \mathbb Z})$  be a $\sigma$-finite filtered measure space and let $\tau:~ \Omega\rightarrow\{-\infty\}\cup \mathbb Z\cup\{+\infty\}$. If for any $i\in \mathbb Z,$ we have
$\{\tau=i\}\in \mathcal{F}_i,$ then $\tau$ is said to be a stopping time. We denote the family of all stopping times by $\mathcal{T}.$
For $i\in \mathbb Z,$ we denote $\mathcal{T}_i:=\{\tau\in \mathcal{T}:~\tau\geq i\}.$

\subsection{Operators and Weights}
Let $f\in\mathcal{L}.$ The Doob maximal operator is defined by
$$Mf=\sup_{i\in \mathbb Z}|\mathbb E_i(f)|.$$
For $i\in \mathbb Z,$ we define the tailed Doob maximal operator
by $${^*M}_if=\sup_{j\geq i}|\mathbb E_j(f)|.$$

For $\omega\in \mathcal{L}$ with $\omega\geq 0,$ we say that $\omega$ is a weight. The set of all weights is denoted by $\mathcal{L}^+.$
Let $B\in \mathcal {F},$ $\omega\in \mathcal{L}^+.$
Then $\int_\Omega\chi_Bd\mu$ and
$\int_\Omega\chi_B\omega d\mu$  are denoted by $|B|$ and $|B|_\omega,$
respectively. Now we give the definition of $A_p$ weights.

\begin{definition}\label{definition o Ap}Let $1<p<\infty$ and let $\omega$ be a weight. 
We say that the weight $\omega$
is  an $A_p$ weight, if
there exists a positive constant $C$ such that
      \begin{equation}\label{AP1}
\sup\limits_{j\in \mathbb Z}\mathbb E_j(\omega)\mathbb E_j(\omega^{1-p'})^{\frac{p}{p'}}\leq C,
      \end{equation}
where $\frac{1}{p}+\frac{1}{p'}=1.$ We denote
the smallest constant $C$ in \eqref{AP1} by $[\omega]_{A_p}$.
\end{definition}

\section{Approaches of Theorem \ref{thm_Ap}}
\label{Proofthm3}

\begin{proof}

We prove that \eqref{norm} implies \eqref{Ap_con}. For $i\in \mathbb{Z}$ and $B\in \mathcal{F}^0_i,$ we let
$f=\chi_B.$
Then $$\mathbb E_i(v^{-\frac{1}{p-1}})\chi_B
\leq M(f\sigma)\chi_B,$$
where $\sigma=v^{\frac{1}{p-1}}.$
It follows from \eqref{norm} that
\begin{eqnarray*}
\Big(\int_B\mathbb E_i(v^{-\frac{1}{p-1}})^pvd\mu\Big)^{\frac{1}{p}}
\leq \|M\|\Big(\int_\Omega v^{-\frac{1}{p-1}}\chi_Bd\mu\Big)^{\frac{1}{p}}.
\end{eqnarray*}

Thus
\begin{eqnarray*}
\mathbb E_i(v^{-\frac{1}{p-1}})^p\mathbb E_i(v)\leq \|M\|^p\mathbb E_i(v^{-\frac{1}{p-1}}),
\end{eqnarray*}
which shows that $$[v]_{A_p}\leq\|M\|^p.$$

In order to prove \eqref{M_con}, we provide  the three approaches which we mentioned in Remark \ref{rkapp}.

{\bf Approach \eqref{Lernerpf}.}
It is clear that
\begin{eqnarray}
\mathbb E_n(f)
&=&\Big(\mathbb E_n(v)\mathbb E_n(\sigma)^{p-1}\frac{1}{\mathbb E_n(v)}\big(\frac{1}{\mathbb E_n(\sigma)}\mathbb E_n(f)\big)^{p-1}\Big)^{\frac{1}{p-1}}\\
&=&\Big(\mathbb E_n(v)\mathbb E_n(\sigma)^{p-1}\Big)^{\frac{1}{p-1}} \Big(\frac{1}{\mathbb E_n(v)}\big(\frac{1}{\mathbb E_n(\sigma)}\mathbb E_n(f)\big)^{p-1}\Big)^{\frac{1}{p-1}}\\
&\leq&[v]^{\frac{1}{p-1}}_{A_{p}}M^{v}\big(v^{-1}M^{\sigma}(f\sigma^{-1})^{p-1}\big)^{\frac{1}{p-1}}.
\end{eqnarray}
Then we have 
\begin{eqnarray}
M(f)\leq[v]^{\frac{1}{p-1}}_{A_{p}}M^{v}\big(v^{-1}M^{\sigma}(f\sigma^{-1})^{p-1}\big)^{\frac{1}{p-1}}.
\end{eqnarray}
Using the boundedness of Doob maximal operators $M^v$ and $M^{\sigma},$ we obtain
\begin{eqnarray}
\|M(f)\|_{L^p(v)}
&\leq&[v]^{\frac{1}{p-1}}_{A_{p}}\|M^{v}\big(v^{-1}M^{\sigma}(f\sigma^{-1})^{p-1}\big)^{\frac{1}{p-1}}\|_{L^p(v)}\\
&=&[v]^{\frac{1}{p-1}}_{A_{p}}\|M^{v}\big(v^{-1}M^{\sigma}(f\sigma^{-1})^{p-1}\big)\|^{\frac{1}{p-1}}_{L^{p’}(v)}\\
&\leq&p^{\frac{1}{p-1}}[v]^{\frac{1}{p-1}}_{A_{p}}\|M^{\sigma}(f\sigma^{-1})\|_{L^p(\sigma)}\\
&\leq&p^{\frac{1}{p-1}}p’[v]^{\frac{1}{p-1}}_{A_{p}}\|f\|_{L^p(v)}\label{wqw}.
\end{eqnarray}

{\bf Approach \eqref{Chenpri}.} For $i\in \mathbb Z,$ $k\in \mathbb Z$ and $\Omega_0\in \mathcal{F}^0_i,$ we denote $$
P_0=\{a^{k-1}< \mathbb E(f\sigma|\mathcal{F}_i)\leq a^k\}\cap\Omega_0.$$
We claim that
\begin{multline}\label{6}
\Big(\int_{P_0}{^*M_i}
(f\sigma\chi_{P_0})^{p}vd\mu\Big)^{\frac{1}{p}} \leq
a^2\eta^{(p’-1)}p’[v]^{\frac{p’}{p}}_{A_{p}}\Big(\int_{P_0} f^p\sigma d\mu\Big)^{\frac{1}{p}},
\end{multline}
where $a,~\eta$ are the constants in the construction of principal sets (Appendix \ref{Appe}).
To see this, denote $h=f\sigma\chi_{P_0}.$ 
For the above $i,$ $P_0$ and $h,$ we construct principal sets. Then, Lemma $\ref{repre}$ shows that
\begin{equation}\label{doob}
\int_{P_0}{^*M_i}
(f\sigma)^{p}vd\mu
\leq a^{2p}\sum\limits_{P\in \mathcal{P}}\int_{E(P)}a^{p({\mathcal{K}}_2(P)-1)}vd\mu.
\end{equation}

To estimate $|E(P)|_v.$ For the sake of simplicity, we denote $E_{\mathcal{F}_{{\mathcal{K}}_1(P)}}(\cdot)$ by 
$E_P(\cdot)$ without confusion.
We now estimate $|E(P)|_v$ as follows:

\begin{eqnarray*}
|E(P)|_v&\leq&|P|_v=\int_{P}\mathbb E_P(v)d\mu\\
&=&\int_{P}\mathbb E_P(v)^{p’}\mathbb E_P(v)^{1-p’}
\mathbb E_P(\sigma)^{p}\mathbb E_P(\sigma)^{-p}d\mu\\
&=&\int_{P}\mathbb E_P(v)^{p’}
\mathbb E_P(\sigma)^{p}\mathbb E_P(v)^{1-p’}\mathbb E_P(\sigma)^{-p}d\mu.
\end{eqnarray*}
In the view of the definition of $A_p$ and the construction of $\mathcal{P},$ we have
\begin{eqnarray*}
|E(P)|_v&\leq&[v]^{p’}_{A_{p}}\int_{P}\mathbb E_P(v)^{1-p'}
\mathbb E_P(\sigma)^{-p}d\mu\\
&\leq&\eta^{p(p’-1)}[v]^{p’}_{A_{p}}\int_{P}\mathbb E_P(v)^{1-p’}
    \mathbb E_P(\sigma)^{-p}
    \mathbb E_P(\chi_{E(P)})^{p(p'-1)}d\mu\\
&=&\eta^{p(p’-1)}[v]^{p’}_{A_{p}}\int_{P}\mathbb E_P(v)^{1-p’}
    \mathbb E_P(\sigma)^{-p}\mathbb E_P(\chi_{E(P)}v^{\frac{1}{p}}\sigma^{\frac{1}{p'}})^{p(p'-1)}d\mu.
\end{eqnarray*}
Noting that the conditional expectation satisfies H\"{o}lder's inequality,  we have
\begin{eqnarray*} |E(P)|_v
&\leq&\eta^{p(p’-1)}[v]^{p’}_{A_{p}}\int_{P}\mathbb E_P(v)^{1-p’}
\mathbb E_P(\sigma)^{-p}\\
&&\times\mathbb E_P(v \chi_{E(P)})^{p’-1}
\mathbb E_P(\sigma\chi_{E(P)})d\mu\\
&\leq&\eta^{p(p’-1)}[v]^{p’}_{A_{p}}\int_{P}
\mathbb E_P(\sigma)^{-p}
\mathbb E_P(\sigma\chi_{E(P)})
d\mu.\end{eqnarray*}
Because $E(P)$ is a subset of $P$ and $a^{{\mathcal{K}}_2(P)-1}\chi_{P}\leq\mathbb E_P(h)\chi_{P},$ we obtain that
\begin{eqnarray*}
\int_{E(P)}a^{p({\mathcal{K}}_2(P)-1)}vd\mu
&\leq&\eta^{p(p’-1)}[v]^{p’}_{A_{p}}\int_{P}\mathbb E_P(f\sigma)^p\mathbb E_P(\sigma)^{-p} \mathbb E_P(\chi_{E(P)}\sigma)d\mu\\
&=&\eta^{p(p’-1)}[v]^{p’}_{A_{p}}\int_{P}\mathbb E_P^{\sigma}(f)^p \mathbb E_P(\chi_{E(P)}\sigma)d\mu,
\end{eqnarray*}
where we have used $ \mathbb E_P(f\sigma)=\mathbb E_P^\sigma(f)\mathbb E_P(\sigma).$
Then
\begin{eqnarray*}
\int_{E(P)}a^{p({\mathcal{K}}_2(P)-1)}vd\mu
&\leq&\eta^{p(p’-1)}[v]^{p’}_{A_{p}}\int_{P}\mathbb E_P^{\sigma}(f)^p \mathbb E_P(\chi_{E(P)}\sigma)d\mu\\
&=&\eta^{p(p’-1)}[v]^{p’}_{A_{p}}\int_{P}\mathbb E_P^{\sigma}(f)^p \chi_{E(P)}\sigma d\mu\\
&\leq&\eta^{p(p’-1)}[v]^{p’}_{A_{p}}\int_{P}M^{\sigma}(f\chi_{P_0})^p \chi_{E(P)}\sigma d\mu\\
&=&\eta^{p(p’-1)}[v]^{p’}_{A_{p}}\int_{E(P)}M^{\sigma}(f\chi_{P_0})^p \sigma d\mu.
\end{eqnarray*}
It follows from \eqref{doob} and the boundedness of Doob maximal operator $M^{\sigma}$ that
\begin{eqnarray*}
\int_{P_0}{^*M_i}(f\sigma)^{p}vd\mu
&\leq&a^{2p}\eta^{p(p’-1)}[v]^{p’}_{A_{p}}\sum\limits_{P\in \mathcal{P}}\int_{E(P)}M^{\sigma}(f\chi_{P_0})^p \sigma d\mu\\
&\leq&a^{2p}\eta^{p(p'-1)}[v]^{p’}_{A_{p}}\sum\limits_{P\in \mathcal{P}}\int_{E(P)}M^{\sigma}(f\chi_{P_0})^p \sigma d\mu\\
&\leq&a^{2p}\eta^{p(p'-1)}(p')^{p}[v]^{p’}_{A_{p}}\int_{P_0}f^p\sigma d\mu,
\end{eqnarray*}
which implies \eqref{6}. Furthermore,
\begin{eqnarray*}
\int_{\Omega_0}{^*M_i}(f\sigma)^{p}vd\mu
&=&\sum\limits_{k\in \mathbb{Z}}\int_{\{a^{k-1}< E(f\sigma|\mathcal{F}_i)\leq a^k\}\cap\Omega_0}{^*M_i}
(f\sigma)^{p}vd\mu\\
&\leq&a^{2p}\eta^{p(p’-1)}(p’)^{p}[v]^{p’}_{A_{p}}\sum\limits_{k\in \mathbb{Z}}\int_{\{a^{k-1}< E(f\sigma|\mathcal{F}_i)\leq a^k\}\cap\Omega_0}f^p\sigma d\mu\\
&\leq&a^{2p}\eta^{p(p’-1)}(p’)^{p}[v]^{p’}_{A_{p}}\int_{\Omega_0}f^p\sigma d\mu.
\end{eqnarray*}
Noting that $(\Omega,\mathcal{F},\mu)$ is a $\sigma$-finite measure space, we obtain that
\begin{eqnarray*}
\big(\int_{\Omega}{^*M_i}(f\sigma)^{p}vd\mu\big)^{\frac{1}{p}}
&\leq&a^2\eta^{(p’-1)}p’[v]^{\frac{p’}{p}}_{A_{p}}\big(\int_{\Omega}f^p\sigma d\mu\big)^{\frac{1}{p}}.
\end{eqnarray*}
Because $^*M_i(\cdot)\uparrow M_i(\cdot)$ as $i\downarrow -\infty,$ then

\begin{equation}\label{2se}
\big(\int_{\Omega}{M}(f\sigma)^{p}vd\mu\big)^{\frac{1}{p}}
\leq a^2\eta^{(p’-1)}p’[v]^{\frac{p’}{p}}_{A_{p}}\big(\int_{\Omega}f^p\sigma d\mu\big)^{\frac{1}{p}}.
\end{equation}

{\bf Approach \eqref{Long}.}
For $f\in L^p(vd\mu),$ $b>1$ and $k\in \mathbb{Z},$ we define
stopping times $$\tau_k=\inf\{n:~|f_n|>b^k\}.$$ 
Then we denote
\begin{equation*}
A_{k,j}:=\{\tau_k<\infty\}\cap\{b^j<\mathbb{E}(\sigma|\mathcal{F}_ {\mathcal{F}_{\tau_k}})\leq b^{j+1}\}
\end{equation*}
and
\begin{equation*}
B_{k,j}:=\{\tau_k<\infty,\tau_{k+1}=\infty\}\cap\{b^j<\mathbb{E}(\sigma|\mathcal{F}_{ \mathcal{F}_{\tau_k}})\leq b^{j+1}\},~ j\in \mathbb{Z}.
\end{equation*}
It follows that $A_{k,j}\in \mathcal{F}_{\tau_k}, B_{k,j}\subseteq A_{k,j}$.
It is clear that $\{B_{k,j}\}_{k,j}$ is a family of disjoint sets and
\begin{equation*}
\{b^k<Mf\leq b^{k+1}\}=\{\tau_k<\infty,\tau_{k+1}=\infty\}=\bigcup\limits_{j\in \mathbb{Z}.} B_{k,j}, k\in \mathbb{Z}.
\end{equation*}
Following from
\begin{equation}\label{change}
\mathbb{E}(f|\mathcal{F}_{\tau_k})=\mathbb{E}^{\sigma}(f\sigma^{-1}|\mathcal{F}_{\tau_k}\big) \mathbb{E}(\sigma|\mathcal{F}_{\tau_k}),
\end{equation}
we have
\begin{eqnarray} b^{kp}&\leq&
                 \mathop{\hbox{ess inf}}\limits_{A_{k,j}}\mathbb{E}(f|\mathcal{F}_{\tau_k})^p\\
              &\leq&\mathop{\hbox{ess inf}}\limits_{A_{k,j}}\mathbb{E}^{\sigma}(f\sigma^{-1}|\mathcal{F}_{\tau_k}\big) ^p
                 \mathop{\hbox{ess sup}}\limits_{A_{k,j}}\mathbb{E}(\sigma|\mathcal{F}_{\tau_k})^p\\
              &\leq&b^p \mathop{\hbox{ess inf}}\limits_{A_{k,j}}\mathbb{E}^{\sigma}(f\sigma^{-1}|\mathcal{F}_{\tau_k})^p
                |B_{k,j}|_v^{-1}\int_{B_{k,j}}\mathbb{E}(\sigma|\mathcal{F}_{\tau_k})^p vd\mu.
\end{eqnarray}
Applying the $A_p$ condition
$$1\leq \mathbb{E}(v|\mathcal{F}_{\tau})\mathbb{E}(\sigma|\mathcal{F}_{\tau})^{p-1}\leq [v]_{A_p},~\forall \tau,$$
we have 
$$\mathbb{E}(\sigma|\mathcal{F}_{\tau_k})^p \leq [v]^{\frac{p}{p-1}}_{A_p}\mathbb{E}(v|\mathcal{F}_{\tau_k})^{-p’}=[v]^{\frac{p}{p-1}}_{A_p}\mathbb{E}^v(v^{-1}|\mathcal{F}_{\tau_k})^{p’}.$$

It follow that 
\begin{eqnarray}
\int_\Omega(Mf)^p v d\mu
   &=&\sum\limits_{k\in \mathbb{ Z}}\int_{\{b^k<Mf\leq b^{k+1}\}}(Mf)^p v d\mu\\
   &\leq&b^p\sum\limits_{k\in \mathbb{ Z}}\int_{\{b^k<Mf\leq b^{k+1}\}}b^{kp} v d\mu\\
   &=&b^p\sum\limits_{k\in \mathbb{ Z},j\in \mathbb{ Z}}\int_{B_{k,j}}b^{kp} v d\mu\\
   &\leq&b^{2p}[v]^{\frac{p}{p-1}}_{A_p}\sum\limits_{k\in\mathbb{ Z},j\in
        \mathbb{ Z}}\mathop{\hbox{ess inf}}\limits_{A_{k,j}}\mathbb{E}^{\sigma}(f\sigma^{-1}|\mathcal{F}_{\tau_k})^p
                \int_{B_{k,j}}\mathbb{E}^v(v^{-1}|\mathcal{F}_{\tau_k})^{p’} vd\mu.
\end{eqnarray} 

Letting 
$X:=\mathbb{Z}^2$ and \begin{equation*}
\vartheta(k,j):=\int_{B_{k,j}}\mathbb{E}^v(v^{-1}|\mathcal{F}_{\tau_k})^{p’} vd\mu,
\end{equation*} we have
that $\vartheta$ is a measure on $X.$ For $f\in L^p(vd\mu)$ and $\lambda>0,$ we denote
\begin{eqnarray}
Tf(k,j)&:=&\mathop{\hbox{ess inf}}\limits_{A_{k,j}}\mathbb{E}^{\sigma}(f\sigma^{-1}|\mathcal{F}_{\tau_k})^p,\\
 \mathbb{E}_\lambda&:=&\Big\{(k,j):\mathop{\hbox{ess inf}}\limits_{A_{k,j}}\mathbb{E}^{\sigma}(f\sigma^{-1}|\mathcal{F}_{\tau_k})^p>\lambda\Big\},\\
G_\lambda&:=&\bigcup\limits_{(k,j)\in \mathbb{E}_\lambda}A_{k,j} .
\end{eqnarray}
 It follows that 
\begin{eqnarray} |\{Tf>\lambda\}|_\vartheta
&=&\sum\limits_{(k,j)
        \in E_\lambda}\int_{B_{k,j}}\mathbb{E}^v(v^{-1}|\mathcal{F}_{\tau_k})^{p’} vd\mu\\
&\leq&\sum\limits_{(k,j)\in \mathbb{E}_\lambda}\int_{B_{k,j}}\mathbb{E}^v(v^{-1}\chi_{G_\lambda}|\mathcal{F}_{\tau_k})^{p’} vd\mu\\
&\leq& \int_{G_\lambda}\big(M^v(v^{-1}\chi_{G_\lambda})\big)^{p’}vd\mu. 
\end{eqnarray} 
For $\tau=\inf\Big\{n:~\mathbb{E}^{\sigma}(f\sigma^{-1}|\mathcal{F}_n)^p>\lambda\Big\},$ we obtain
$G_\lambda\subseteq\Big\{M^{\sigma}(f\sigma^{-1})^p>\lambda\Big\}=\{\tau<\infty\}.$
In view of the boundedness of Doob maximal operator $M^v,$ we get that
 \begin{eqnarray} |\{Tf>\lambda\}|_\vartheta
&\leq&\int_{G_\lambda}\big(M^{v}(v^{-1}
      \chi_{G_\lambda})\big)^{p’}vd\mu\\
&\leq& \int_{\{\tau<\infty\}}\big(M^{v}(v^{-1}
      \chi_{\{\tau<\infty\}})\big)^{p’}vd\mu \\
&\leq& p^{p’}|\{\tau<\infty\}|_{\sigma}\\
&=&p^{p’}|\{M^{\sigma}(f\sigma^{-1})^p>\lambda\}|_{\sigma}.
 \end{eqnarray}
Therefore
\begin{eqnarray}\label{share2}~~~~~~\int_{\Omega}(Mf)^p vd\mu
&\leq&b^{2p}[v]^{\frac{p}{p-1}}_{A_p}\int_XTfd\vartheta=b^{2p}[v]^{\frac{p}{p-1}}_{A_p}\int_0^\infty|\{Tf>\lambda\}|_\vartheta
              d\lambda\\
&\leq& b^{2p}p^{p’}[v]^{\frac{p}{p-1}}_{A_p}\int_0^{\infty}|\{M^{\sigma}(f\sigma^{-1})^p>\lambda\}|_{\sigma}
              d\lambda\nonumber  \\
&=&b^{2p}p^{p’}[v]^{\frac{p}{p-1}}_{A_p}\int_\Omega M^{\sigma}(f\sigma^{-1})^p\sigma
             d\mu.
\end{eqnarray} 
Using the boundedness of Doob maximal operator $M^{\sigma},$ we conclude that
\begin{equation}\label{share3}\int_{\Omega}(Mf)^p vd\mu
\leq b^{2p}p^{p’}{p’}^p[v]^{\frac{p}{p-1}}_{A_p}\int_\Omega|f|^pvd\mu.
\end{equation}

Taking limit as $b\rightarrow 1+$ in \eqref{share3}, we have
\begin{equation}\label{share4}\lVert M  f\rVert _{L ^{p} (v)} 
\leq p’p^{^{\frac{1}{p-1}}}[v]^{\frac{1}{p-1}}_{A_p} \lVert f\rVert _{L ^{p} (v)}.
\end{equation}

\end{proof}

\section{Comparison of $p^{\frac{1}{p-1}}$ and $a^2\eta^{(p’-1)}$}\label{Compar}

We compare $p^{\frac{1}{p-1}}$ with $a^{2}\eta^{(p’-1)}$ in this section, 
where $a>1$ and $\eta=\frac{a}{a-1}$ are the constants in the construction of principal sets (Appendix \ref{Appe}).
 We split our comparison into two theorems, Theorem \ref{mini} and Theorem \ref{compar-lim}.

\begin{theorem}\label{mini}
For $1<p<+\infty,$ let $\varphi(a)=a^{2}\eta^{(p’-1)}.$ Then we have 
\begin{equation}
\min\limits_{a>1}\varphi(a)=\varphi(\frac{2p-1}{2p-2}).
\end{equation}
\end{theorem}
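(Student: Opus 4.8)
The plan is to minimize $\varphi$ by elementary single-variable calculus, passing to $\log\varphi$ to turn the product into a sum. Writing $\eta=\frac{a}{a-1}$ and recalling that $p'-1=\frac{1}{p-1}>0$ since $1<p<\infty$, I would first record the closed form
\[
\log\varphi(a)=2\log a+(p'-1)\bigl(\log a-\log(a-1)\bigr)=(p'+1)\log a-(p'-1)\log(a-1),
\]
which is valid for all $a>1$.

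Next I would differentiate. Since $\varphi(a)>0$ on $(1,\infty)$, the critical points of $\varphi$ coincide with the zeros of $\frac{d}{da}\log\varphi$, and
\[
\frac{d}{da}\log\varphi(a)=\frac{p'+1}{a}-\frac{p'-1}{a-1}.
\]
Setting the right-hand side to zero and clearing denominators gives $(p'+1)(a-1)=(p'-1)a$, hence $2a=p'+1$, so the unique critical point is $a^{*}=\frac{p'+1}{2}$. Using $p'=\frac{p}{p-1}$ I would then simplify $a^{*}=\frac{p'+1}{2}=\frac{2p-1}{2p-2}$, and observe that $a^{*}>1$ because $2p-1>2p-2$.

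Finally I would argue that this single critical point is the global minimizer. As $a\to1^{+}$ the factor $\eta^{p'-1}\to+\infty$ while $a^{2}\to1$, so $\varphi(a)\to+\infty$; as $a\to+\infty$ we have $\eta\to1$ while $a^{2}\to+\infty$, so again $\varphi(a)\to+\infty$. Equivalently, the derivative $\frac{p'+1}{a}-\frac{p'-1}{a-1}$ is negative for $a$ near $1^{+}$ (the second term dominates) and behaves like $\frac{2}{a}>0$ for large $a$, so $\varphi$ strictly decreases on $(1,a^{*})$ and strictly increases on $(a^{*},\infty)$. Hence $\min_{a>1}\varphi(a)=\varphi\bigl(\tfrac{2p-1}{2p-2}\bigr)$.

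There is no serious obstacle here; the only point demanding a little care is confirming that the lone stationary point is a genuine global minimum rather than merely a critical point, and this is settled by the boundary-behaviour argument above, since $\varphi$ is smooth on $(1,\infty)$, has a unique critical point, and tends to $+\infty$ at both ends of the interval.
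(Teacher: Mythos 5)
Your proof is correct and follows essentially the same route as the paper: pass to $\log\varphi$, differentiate, locate the unique critical point $a^{*}=\frac{p'+1}{2}=\frac{2p-1}{2p-2}$, and use the blow-up of $\varphi$ as $a\to1^{+}$ and $a\to+\infty$ to conclude it is the global minimum. Your version is, if anything, slightly tidier in writing $\frac{1}{p-1}$ as $p'-1$ and in making the monotonicity on $(1,a^{*})$ and $(a^{*},\infty)$ explicit, a point the paper leaves implicit.
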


\begin{proof} We deal with $\ln\varphi(a).$ Then $$\ln\varphi(a)=2\ln a+\frac{1}{p-1}\ln{\frac{a}{a-1}}.$$

It is easy to check $\lim\limits_{a \rightarrow 1+}\ln \varphi(a)=\lim\limits_{a\rightarrow+\infty}\ln\varphi(a)=+\infty.$ We have
$$\big(\ln\varphi(a) \big)’=\frac{2}{a}+\frac{1}{a(p-1)}-\frac{1}{(a-1)(p-1)}.$$
It is clear that the unique $a_0=:\frac{2p-1}{2p-2}$ solves equation $\big(\ln\varphi(a) \big)’=0$ and $a_0=\frac{2p-1}{2p-2}>1.$

Thus
$$
\min\limits_{a>1}\varphi(a)=\varphi(\frac{2p-1}{2p-2})=(\frac{2p-1}{2p-2})^2(2p-1)^{\frac{1}{p-1}}.
$$

\end{proof}

It follows from Theorem \ref{mini} that the minimum of $\varphi(a)$ is a function of $p.$ Then we denote the 
minimum $(\frac{2p-1}{2p-2})^2(2p-1)^{\frac{1}{p-1}}$ and the constant $p^{\frac{1}{p-1}}$  by $\phi(p)$ and $\psi(p),$ respectively. Because of $\frac{2p-1}{2p-2}>1$ and $2p-1>p,$
we have $\phi(p)\geq \psi(p).$ Now we study limits of $\phi(p)$ and $\psi(p)$ 
in the following Theorem   
\ref{compar-lim}.

\begin{theorem}\label{compar-lim} Let $\phi$ and $\psi$ as above.
Then
\begin{equation}\label{limit1}
\lim\limits_{p\rightarrow 1+}\phi(p)=+\infty,~\lim\limits_{p\rightarrow 1+}\psi(p)=e
\end{equation}
and 
\begin{equation}\label{limit2}
\lim\limits_{p\rightarrow +\infty}\phi(p)=\lim\limits_{p\rightarrow +\infty}\psi(p)=1.
\end{equation}
Moreover
\begin{equation}\label{frac}
\lim\limits_{p\rightarrow+\infty}\frac{\ln \phi(p)}{\ln \psi(p)}=1.
\end{equation}
\end{theorem}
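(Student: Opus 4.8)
The plan is to reduce every assertion to an elementary limit by passing to logarithms. With $\psi(p)=p^{\frac{1}{p-1}}$ and $\phi(p)=\bigl(\tfrac{2p-1}{2p-2}\bigr)^2(2p-1)^{\frac{1}{p-1}}$ we have
\[
\ln\psi(p)=\frac{\ln p}{p-1},\qquad
\ln\phi(p)=2\ln\frac{2p-1}{2p-2}+\frac{\ln(2p-1)}{p-1},
\]
so it suffices to track these two expressions and then exponentiate.

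For $p\to 1+$ I would argue as follows. Since $\frac{\ln p}{p-1}\to 1$ (this is the difference quotient of $\ln$ at $1$, whose limit is $\ln'(1)=1$), we get $\ln\psi(p)\to 1$ and hence $\psi(p)\to e$. In $\ln\phi(p)$ the first summand diverges because $\frac{2p-1}{2p-2}\to+\infty$, while the second summand, using $2p-1=1+2(p-1)$ and $\ln(1+x)\sim x$, tends to $2$; thus $\ln\phi(p)\to+\infty$ and $\phi(p)\to+\infty$. This establishes \eqref{limit1}.

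For $p\to+\infty$ both logarithms visibly vanish: $\frac{\ln p}{p-1}\to0$, while $2\ln\frac{2p-1}{2p-2}\to 2\ln 1=0$ and $\frac{\ln(2p-1)}{p-1}\to0$. Hence $\psi(p)\to1$ and $\phi(p)\to1$, which is \eqref{limit2}.

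The only step requiring real care is the ratio \eqref{frac}. Here I would clear the common denominator $p-1$ by writing
\[
\frac{\ln\phi(p)}{\ln\psi(p)}
=\frac{2(p-1)\ln\frac{2p-1}{2p-2}+\ln(2p-1)}{\ln p}.
\]
The term $2(p-1)\ln\frac{2p-1}{2p-2}=2(p-1)\ln\bigl(1+\tfrac{1}{2(p-1)}\bigr)$ is an $\infty\cdot0$ indeterminate form whose limit is $1$ (again by $\ln(1+x)\sim x$), so after division by $\ln p\to+\infty$ it contributes $0$; the remaining term gives $\frac{\ln(2p-1)}{\ln p}\to1$, since $\ln(2p-1)-\ln p=\ln(2-\tfrac1p)$ stays bounded. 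Therefore the ratio tends to $1$. The main obstacle is precisely disentangling this bounded $\infty\cdot0$ contribution from the unboundedly growing $\ln(2p-1)$ in the numerator; once the expression is written over the common factor as above, each of the two pieces is a routine limit.
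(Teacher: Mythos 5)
Your proof is correct and follows essentially the same route as the paper: pass to logarithms, evaluate each summand of $\ln\phi$ and $\ln\psi$ termwise at $p\to 1+$ and $p\to+\infty$, and for \eqref{frac} multiply through by $p-1$ and split the numerator, with $2(p-1)\ln\bigl(1+\tfrac{1}{2(p-1)}\bigr)$ bounded (indeed tending to $1$) and $\tfrac{\ln(2p-1)}{\ln p}\to 1$. Your write-up merely adds a few explicit justifications (the difference-quotient argument for $\tfrac{\ln p}{p-1}\to 1$ and the bounded difference $\ln(2-\tfrac1p)$) that the paper leaves implicit.
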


\begin{proof} Because
\begin{equation}
\lim\limits_{p\rightarrow 1+}\ln\phi(p)=\lim\limits_{p\rightarrow 1+}2\ln(\frac{2p-1}{2p-2})+\lim\limits_{p\rightarrow 1+}\frac{1}{p-1}\ln(2p-1)=+\infty,
\end{equation}
and 
\begin{equation}
\lim\limits_{p\rightarrow +\infty}\ln\phi(p)=\lim\limits_{p\rightarrow +\infty}2\ln(\frac{2p-1}{2p-2})+\lim\limits_{p\rightarrow +\infty}\frac{1}{p-1}\ln(2p-1)=0,
\end{equation}
we have $\lim\limits_{p\rightarrow 1+}\phi(p)=+\infty$
and $\lim\limits_{p\rightarrow +\infty}\phi(p)=1,$
respectively.

Similarly, we get $\lim\limits_{p\rightarrow 1+}\psi(p)=e$ and $\lim\limits_{p\rightarrow +\infty}\psi(p)=1.$

Finally, we obtain
\begin{eqnarray}
\lim\limits_{p\rightarrow+\infty}\frac{\ln \phi(p)}{\ln \psi(p)}
&=&\lim\limits_{p\rightarrow+\infty}\frac{2\ln(\frac{2p-1}{2p-2})+\frac{1}{p-1}\ln(2p-1)}{\frac{1}{p-1}\ln p}\\
&=&\lim\limits_{p\rightarrow+\infty}\frac{2(p-1)\ln(\frac{2p-1}{2p-2})+\ln(2p-1)}{\ln p}\\
&=&\lim\limits_{p\rightarrow+\infty}\frac{2(p-1)\ln(\frac{2p-1}{2p-2})}{\ln p}+\lim\limits_{p\rightarrow+\infty}\frac{\ln(2p-1)}{\ln p}\\
&=&0+1=1.
\end{eqnarray}

\end{proof}

\begin{remark}\label{compar} We give further properties of $\phi(p)$ and $\psi(p).$

\begin{enumerate}
\item We claim that the function $\phi(p)$ is decreasing on $(1, +\infty).$ Writing $\phi_1(p)=(\frac{2p-1}{2p-2})^2$ and $\phi_2(p)=(2p-1)^{\frac{1}{p-1}},$ 
we  will show that $\phi_1(p)$ and $\phi_2(p)$ are both decreasing on $(1, +\infty).$  Combining this with $0<\phi_1(p)$ and $0<\phi_2(p),$ we obtain that $\phi(p)$ is decreasing on $(1, +\infty).$ 
We now check that $\phi_1(p)$ and $\phi_2(p)$ are both decreasing.

For $\phi_1(p)$ with $p\in(1,+\infty),$ 
it is clear that 
\begin{eqnarray}
\phi_1(p)=(\frac{2p-1}{2p-2})^2=(1+\frac{1}{2p-2})^2.
\end{eqnarray}
Thus $\phi_1(p)$ is decreasing on $(1, +\infty).$ 

For $\phi_2(p)$ with $p\in(1,+\infty),$ to consider  $$\ln\phi_2(p)=\frac{1}{p-1}\ln(2p-1).$$ 
It is clear that 
\begin{eqnarray}
\big(\ln\phi_2(p)\big)’&=&\frac{1}{{(p-1)}^2}\Big((\frac{2}{2p-1})(p-1)-\ln(2p-1)\Big)\\
&=&\frac{1}{{(p-1)}^2}\Big(\frac{2(p-1)}{2p-1}-\ln(2p-1)\Big).\end{eqnarray}
Using the Mean Value Theorem, we have 
$$\ln(2p-1)=\ln(2p-1)-\ln 1=\frac{1}{\xi}(2p-1-1)=\frac{1}{\xi}\big(2(p-1)\big),$$
where $\xi\in (1,2p-1).$
It follows that \begin{eqnarray}
\ln(2p-1)>\frac{2(p-1)}{2p-1},
\end{eqnarray} which implies $\big(\ln\phi_2(p)\big)’<0.$
Thus $\phi_2(p)$ is decreasing on $(1, +\infty).$ 

\item We claim that the function $\psi(p)$ is decreasing on $(1, +\infty).$ It suffices to show that $\psi’(p)<0.$ We have $$\psi’(p)=\frac{\psi(p)}{(p-1)^2}(1-\frac{1}{p}+\ln\frac{1}{p}).$$
It is clear that $\psi’(p)<0$ if and only if $1-\frac{1}{p}+\ln\frac{1}{p}<0.$ Let $s(t)=1-t+\ln t$ with $t\in (0,1].$ Because of $s’(t)=\frac{1}{t}-1>0$ on $(0,1),$ the function $s(t)$ is strictly increasing on
 $(0,1].$ It follows from $s(1)=0$ that $s(t)<0$ on $(0,1).$ That is $1-\frac{1}{p}+\ln\frac{1}{p}<0$ with $p>1.$ Thus $\psi(p)$ is decreasing on $(1, +\infty).$
\end{enumerate}

At the end of Section \ref{Compar},  we check our work with graphing device in Figure $\ref{Fig1}.$

\begin{figure}[htbp]
\begin{minipage}[t]{120mm}
\vspace {2mm}
\centering\includegraphics[width=8cm]{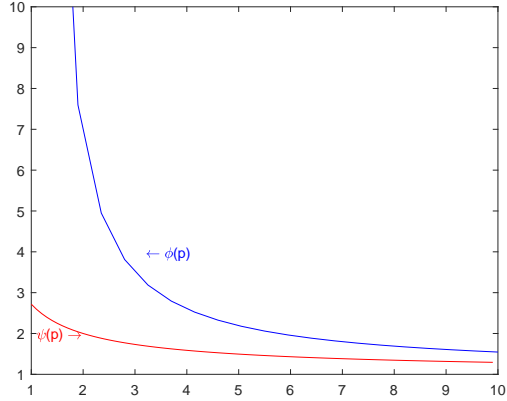}
\caption{Computer confirmation of $\phi(p)$ and $\psi(p)$}
\label{Fig1}
\end{minipage}
\end{figure}

\end{remark}

\appendix
\section{Construction of Principal Sets}\label{Appe}

The construction of principal sets first appeared in Tanaka and Terasawa \cite{MR3004953},
and Chen, Zhu, Zuo and Jiao \cite{MR4244905, MR4125846} found the conditional sparsity of the construction, which is new and useful. We will use  the construction
of principal sets. Because we keep track the constants of the conditional sparsity,  we will give the modifications
in the construction of principal sets in this Appendix. 

For $i\in \mathbb Z,$ $h\in \mathcal{L}^+,$ $a>1$
and $k\in \mathbb Z,$ stopping times are defined by$$
\tau:=\inf\{j\geq i:
~\mathbb E(h|\mathcal{F}_j)>a^{k+1}\}.$$
Let \begin{equation}\label{P0}
P_0:=\{a^{k-1}< \mathbb E(h|\mathcal{F}_i)\leq a^k\}\cap\Omega_0,\end{equation}
where $\Omega_0\in \mathcal{F}^0_i,$
then $P_0\in \mathcal{F}^0_i.$
We denote $\mathcal{K}_1(P_0):=i$ and $\mathcal{K}_2(P_0):=k.$ Then we define
$\mathcal{P}_1:=\{P_0\},$  which is the first generation $\mathcal{P}_1$. Now we show how to define the second one. Let
$$
\tau_{P_0}:=\tau\chi_{P_0}+\infty\chi_{P^c_0},$$
where $P^c_0=\Omega\setminus P_0.$
Let $P$ be a subset of $ P_0$ with $\mu(P)>0.$ 
If  there is $i<j$ and $k+1<j$ such that
\begin{eqnarray}
P&=&\{a^{l-1}< \mathbb E(h|\mathcal{F}_j)
          \leq a^l\}\cap\{\tau_{P_0}=j\}\cap P_0\\
&=&\{a^{l-1}< \mathbb E(h|\mathcal{F}_j)
          \leq a^l\}\cap\{\tau=j\}\cap P_0,\end{eqnarray}
we say that $P$ is a principal set of $P_0.$
We denote $\mathcal{K}_1(P):=j$ and $\mathcal{K}_2(P):=l.$ Letting $\mathcal{P}(P_0)$ be the family of the above principal sets of $P_0,$  we say that 
$\mathcal{P}_2:=\mathcal{P}(P_0)$ is the second generation.

Following \cite[P.804]{MR4125846}, we have
\begin{equation}\label{constant}
\mu(P_0)\leq\frac{a}{a-1}\mu\big(E(P_0)\big)=:\eta \mu\big(E(P_0)\big)
\end{equation}
where 
$$
E(P_0):=P_0\cap\{\tau_{P_0}=\infty\}=P_0\cap\{\tau=\infty\}=P_0\backslash\bigcup\limits_{P\in \mathcal{P}(P_0)}P.
$$
Furthermore,  we have
 $\chi_{P_0}\leq \eta\mathbb E_i(\chi_{E(P_0)})\chi_{P_0},$ 
which is called {\bf the conditional sparsity of principal sets with $\eta$}(see \cite{MR4244905, MR4125846}).

Proceeding inductively, we obtain the next generalizations
$$
\mathcal{P}_{n+1}:=\bigcup\limits_{P\in \mathcal{P}_{n}}\mathcal{P}(P).
$$
Let $$
\mathcal{P}:=\bigcup\limits_{n=1}^{\infty}\mathcal{P}_{n},
$$
then the collection of principal sets $\mathcal{P}$ satisfies the following properties:
\begin{enumerate}
  \item The sets $E(P)$ where $P\in \mathcal{P},$ are disjoint and $P_0=\bigcup\limits_{P\in \mathcal{P}}E(P);$
  \item $P\in\mathcal{F}_{{\mathcal{K}}_1(P)};$
  \item  \label{property3}$\chi_{P}\leq \eta\mathbb E(\chi_{E(P)}|\mathcal{F}_{{\mathcal{K}}_1(P)})\chi_{P};$
  \item  $a^{{\mathcal{K}}_2(P)-1}<\mathbb E(h|\mathcal{F}_{{\mathcal{K}}_1(P)})\leq a^{{\mathcal{K}}_2(P)}$ on $P;$
  \item  $\sup\limits_{j\geq i}\mathbb E_j(h\chi_P)
                 \leq a^{{\mathcal{K}}_2(P)+1}$ on $E(P);$
  \item  $\chi_{\{\mathcal{K}_1(P)\leq j<\tau(P)\}}\mathbb E_j(h)\leq a^{{\mathcal{K}}_2(P)+1}.$
\end{enumerate}
where $\eta=a/(a-1).$

Now, we represent the tailed Doob maximal operator by the principal sets, which is the following lemma.
\begin{lemma}\label{repre}Let $h\in \mathcal{L}^+,$ $a>1$ and $i\in \mathbb{Z}.$
For $k\in \mathbb{Z}$ and $\Omega_0\in \mathcal{F}^0_i,$ we let $$P_0:=\{a^{k-1}< \mathbb E(h|\mathcal{F}_i)\leq a^k\}\cap\Omega_0.$$
If $\mu(P_0)>0,$ then
\begin{eqnarray*}
{^*M_i}(h)\chi_{P_0}
&=&{^*M_i}(h\chi_{P_0})\chi_{P_0}\\
&=&\sum\limits_{P\in \mathcal{P}}{^*M_i}(h\chi_{P_0})\chi_{E(P)}\\
&\leq&a^2\sum\limits_{P\in \mathcal{P}}a^{({\mathcal{K}}_2(P)-1)}\chi_{E(P)}.
\end{eqnarray*}
\end{lemma}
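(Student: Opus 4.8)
The plan is to establish the three relations in the order displayed: the two equalities first, then the pointwise bound, which carries all the content.

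First I would prove ${^*M_i}(h)\chi_{P_0}={^*M_i}(h\chi_{P_0})\chi_{P_0}$. Since $P_0\in\mathcal{F}^0_i\subseteq\mathcal{F}_j$ for every $j\geq i$, the indicator $\chi_{P_0}$ is $\mathcal{F}_j$-measurable, so pulling out the measurable factor gives $\mathbb{E}_j(h\chi_{P_0})=\chi_{P_0}\mathbb{E}_j(h)$; multiplying by $\chi_{P_0}$ and taking the supremum over $j\geq i$ (which commutes with the nonnegative factor $\chi_{P_0}$) yields the first equality. The second equality is then immediate from property (1): the $E(P)$ are disjoint with $P_0=\bigcup_{P\in\mathcal{P}}E(P)$, so $\chi_{P_0}=\sum_{P\in\mathcal{P}}\chi_{E(P)}$ pointwise a.e., and distributing this sum against ${^*M_i}(h\chi_{P_0})$ gives $\sum_{P\in\mathcal{P}}{^*M_i}(h\chi_{P_0})\chi_{E(P)}$.

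The substance is the final inequality, for which, by disjointness of the $E(P)$, it suffices to show $\mathbb{E}_j(h\chi_{P_0})\leq a^{\mathcal{K}_2(P)+1}=a^2 a^{\mathcal{K}_2(P)-1}$ on $E(P)$ for every $j\geq i$. I would split the supremum at the level $\mathcal{K}_1(P)$. For $j\geq\mathcal{K}_1(P)$, property (2) gives $P\in\mathcal{F}_{\mathcal{K}_1(P)}\subseteq\mathcal{F}_j$, so on $P$ one has $\mathbb{E}_j(h\chi_{P_0})=\mathbb{E}_j(h\chi_P)$, since the contribution of $h\chi_{P_0\setminus P}$ vanishes on $P$ after pulling out the $\mathcal{F}_j$-measurable factor $\chi_P$ and using $\chi_P\chi_{P_0\setminus P}=0$; property (5) then bounds this by $a^{\mathcal{K}_2(P)+1}$ on $E(P)$. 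For the coarse scales $i\leq j<\mathcal{K}_1(P)$, I would instead use $\mathbb{E}_j(h\chi_{P_0})\leq\mathbb{E}_j(h)$ and control $\mathbb{E}_j(h)$ through the chain of ancestors of $P$: a point of $E(P)$ lies in a nested genealogy $P_0=P^{(1)}\supsetneq\cdots\supsetneq P^{(r)}=P$, each such $j$ lies in some interval $[\mathcal{K}_1(P^{(s)}),\mathcal{K}_1(P^{(s+1)}))$, and $E(P)$ falls in the event $\{\mathcal{K}_1(P^{(s)})\leq j<\tau(P^{(s)})\}$, so property (6) applied to $P^{(s)}$ gives $\mathbb{E}_j(h)\leq a^{\mathcal{K}_2(P^{(s)})+1}$. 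Because a principal child is created only where $\mathbb{E}(h|\mathcal{F}_{\mathcal{K}_1})>a^{\mathcal{K}_2(\text{parent})+1}$, the index $\mathcal{K}_2$ increases by at least two from parent to child, whence $\mathcal{K}_2(P^{(s)})+1\leq\mathcal{K}_2(P)-1$ and the coarse-scale bound is even smaller than $a^{\mathcal{K}_2(P)+1}$. Taking the supremum over both ranges and summing over $P\in\mathcal{P}$ gives the stated inequality.

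I expect the coarse-scale range $i\leq j<\mathcal{K}_1(P)$ to be the main obstacle: property (5) controls only $h\chi_P$, whereas below $\mathcal{K}_1(P)$ the average of $h\chi_{P_0}$ genuinely exceeds that of $h\chi_P$ and must be dominated through the generational chain together with the growth of $\mathcal{K}_2$. By comparison, the fine-scale range $j\geq\mathcal{K}_1(P)$ and the two equalities are routine measurability manipulations.
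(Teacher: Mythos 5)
Your proof is correct and is essentially the intended argument: the paper states Lemma \ref{repre} without proof (deferring to the construction of \cite{MR3004953, MR4125846}), and your steps — the pull-out/disjointness identities for the two equalities, the split of the supremum at $\mathcal{K}_1(P)$, property (5) on the fine scales $j\geq\mathcal{K}_1(P)$, and property (6) applied along the ancestor chain together with the jump $\mathcal{K}_2(\mathrm{child})\geq\mathcal{K}_2(\mathrm{parent})+2$ (forced by the stopping threshold $a^{\mathcal{K}_2+1}$, and visible in the paper's requirement $k+1<l$ for a principal set) on the coarse scales $i\leq j<\mathcal{K}_1(P)$ — are exactly what the listed properties of principal sets are designed to deliver. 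You also correctly identified the coarse-scale range as the only substantive point, since there $\mathbb{E}_j(h\chi_{P_0})$ genuinely exceeds $\mathbb{E}_j(h\chi_P)$ and must be dominated through the genealogy.
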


\bibliographystyle{alpha,amsplain}	

\begin{bibdiv}
\begin{biblist}

\bib{MR3424618}{article}{
   author={Cao, Mingming},
   author={Xue, Qingying},
   title={Characterization of two-weighted inequalities for multilinear
   fractional maximal operator},
   journal={Nonlinear Anal.},
   volume={130},
   date={2016},
   pages={214--228},
   issn={0362-546X},
   review={\MR{3424618}},
   doi={10.1016/j.na.2015.10.004},
}

\bib{MR4244905}{article}{
   author={Chen, Wei},
   author={Jiao, Yong},
   title={Weighted estimates for the bilinear maximal operator on filtered
   measure spaces},
   journal={J. Geom. Anal.},
   volume={31},
   date={2021},
   number={5},
   pages={5309--5335},
   issn={1050-6926},
   review={\MR{4244905}},
 url={10.1007/s12220-020-00478-z},
}

\bib{MR4125846}{article}{
      author={Chen, Wei},
      author={Zhu, Chunxiang},
      author={Zuo, Yahui},
      author={Jiao, Yong},
       title={Two-weighted estimates for positive operators and {D}oob maximal
  operators on filtered measure spaces},
        date={2020},
        ISSN={0025-5645},
     journal={J. Math. Soc. Japan},
      volume={72},
      number={3},
       pages={795\ndash 817},
         url={https://doi.org/10.2969/jmsj/80058005},
      review={\MR{4125846}},
}

\bib{MR2901199}{article}{
      author={Hyt\"{o}nen, Tuomas},
      author={Kairema, Anna},
       title={Systems of dyadic cubes in a doubling metric space},
        date={2012},
        ISSN={0010-1354},
     journal={Colloq. Math.},
      volume={126},
      number={1},
       pages={1\ndash 33},
         url={https://doi.org/10.4064/cm126-1-1},
      review={\MR{2901199}},
}

\bib{MR2854692}{article}{
      author={Hyt\"{o}nen, Tuomas},
      author={Kemppainen, Mikko},
       title={On the relation of {C}arleson's embedding and the maximal theorem
  in the context of {B}anach space geometry},
        date={2011},
        ISSN={0025-5521},
     journal={Math. Scand.},
      volume={109},
      number={2},
       pages={269\ndash 284},
         url={https://doi.org/10.7146/math.scand.a-15189},
      review={\MR{2854692}},
}

\bib{MR3617205}{book}{
      author={Hyt\"{o}nen, Tuomas},
      author={van Neerven, Jan},
      author={Veraar, Mark},
      author={Weis, Lutz},
       title={Analysis in {B}anach spaces. {V}ol. {I}. {M}artingales and
  {L}ittlewood-{P}aley theory},
      series={Ergebnisse der Mathematik und ihrer Grenzgebiete. 3. Folge. A
  Series of Modern Surveys in Mathematics [Results in Mathematics and Related
  Areas. 3rd Series. A Series of Modern Surveys in Mathematics]},
   publisher={Springer, Cham},
        date={2016},
      volume={63},
        ISBN={978-3-319-48519-5; 978-3-319-48520-1},
      review={\MR{3617205}},
}

\bib{MR2912709}{article}{
      author={Hyt\"{o}nen, Tuomas~P.},
       title={The sharp weighted bound for general {C}alder\'{o}n-{Z}ygmund
  operators},
        date={2012},
        ISSN={0003-486X},
     journal={Ann. of Math. (2)},
      volume={175},
      number={3},
       pages={1473\ndash 1506},
         url={https://doi.org/10.4007/annals.2012.175.3.9},
      review={\MR{2912709}},
}

\bib{MR2657437}{article}{
      author={Lacey, Michael~T.},
      author={Petermichl, Stefanie},
      author={Reguera, Maria~Carmen},
       title={Sharp {$A_2$} inequality for {H}aar shift operators},
        date={2010},
        ISSN={0025-5831},
     journal={Math. Ann.},
      volume={348},
      number={1},
       pages={127\ndash 141},
         url={https://doi.org/10.1007/s00208-009-0473-y},
      review={\MR{2657437}},
}

\bib{MR2399047}{article}{
      author={Lerner, Andrei~K.},
       title={An elementary approach to several results on the
  {H}ardy-{L}ittlewood maximal operator},
        date={2008},
        ISSN={0002-9939},
     journal={Proc. Amer. Math. Soc.},
      volume={136},
      number={8},
       pages={2829\ndash 2833},
         url={https://doi.org/10.1090/S0002-9939-08-09318-0},
      review={\MR{2399047}},
}

\bib{MR1224450}{book}{
      author={Long, Rui~Lin},
       title={Martingale spaces and inequalities},
   publisher={Peking University Press, Beijing; Friedr. Vieweg \& Sohn,
  Braunschweig},
        date={1993},
        ISBN={7-301-02069-4},
         url={https://doi.org/10.1007/978-3-322-99266-6},
      review={\MR{1224450}},
}

\bib{MR2534183}{article}{
      author={Moen, Kabe},
       title={Sharp one-weight and two-weight bounds for maximal operators},
        date={2009},
        ISSN={0039-3223},
     journal={Studia Math.},
      volume={194},
      number={2},
       pages={163\ndash 180},
         url={https://doi.org/10.4064/sm194-2-4},
      review={\MR{2534183}},
}

\bib{MR3644418}{book}{
      author={Schilling, Ren\'{e}~L.},
       title={Measures, integrals and martingales},
     edition={Second},
   publisher={Cambridge University Press, Cambridge},
        date={2017},
        ISBN={978-1-316-62024-3},
      review={\MR{3644418}},
}

\bib{MR1267569}{book}{
      author={Stroock, Daniel~W.},
       title={Probability theory, an analytic view},
   publisher={Cambridge University Press, Cambridge},
        date={1993},
        ISBN={0-521-43123-9},
      review={\MR{1267569}},
}

\bib{MR3004953}{article}{
      author={Tanaka, Hitoshi},
      author={Terasawa, Yutaka},
       title={Positive operators and maximal operators in a filtered measure
  space},
        date={2013},
        ISSN={0022-1236},
     journal={J. Funct. Anal.},
      volume={264},
      number={4},
       pages={920\ndash 946},
         url={https://doi.org/10.1016/j.jfa.2012.12.003},
      review={\MR{3004953}},
}

\end{biblist}
\end{bibdiv}

\end{document}